\def\N{\mathbb{N}}
\def\C{\mathbb{C}}
\newcommand{\J}{\mathscr{J}}
\newtheorem{theorem}{Theorem}[section]
\newtheorem{definition}[theorem]{Definition}
\newtheorem{corollary}[theorem]{Corollary}
\title{\textbf{\sc On $J$-Colouring of Chithra Graphs}}
\author{Johan Kok$^1$, Sudev Naduvath$^2$\footnote{Corresponding Author}}
\affil{\small Centre for Studies in Discrete Mathematics\\ Vidya Academy of Science \& Technology \\ Thalakkottukara, Thrissur-680501, Kerala, India.\\{\tt $^1$kokkiek2@tshwane.gov.za},{\tt $^2$sudevnk@gmail.com}}
\date{}
\begin{document}
\maketitle

\begin{abstract}
\noindent The family of Chithra graphs is a wide ranging family of graphs which includes any graph of size at least one. Chithra graphs serve as a graph theoretical model for genetic engineering techniques or for modelling natural mutation within various biological networks found in living systems. In this paper, we discuss recently introduced $J$-colouring of the family of Chithra graphs. 
\end{abstract}

\noindent\textbf{Keywords:} Chithra graph, chromatic colouring of graphs, $J$-colouring of graphs, rainbow neighbourhood in a graph.

\vspace{0.25cm}

\noindent\textbf{AMS Classification Numbers:} 05C15, 05C38, 05C75, 05C85.

\section{Introduction}

For general notations and concepts in graphs and digraphs see \cite{BM1,FH,DBW}. Unless mentioned otherwise, all graphs $G$ mentioned in this paper are simple and finite graphs. Note that the order and size of a graph $G$ are denoted by $\nu(G)=n$ and $\varepsilon(G)=p$. The minimum and maximum degrees of $G$ are respectively denoted bby $\delta(G)$ and $\Delta(G)$. The degree of a vertex $v \in V(G)$ is denoted $d_G(v)$ or simply by $d(v)$, when the context is clear. 

We recall that if $\mathcal{C}= \{c_1,c_2,c_3,\dots,c_\ell\}$ and $\ell$ sufficiently large, is a set of distinct colours, a proper vertex colouring of a graph $G$ denoted $\varphi:V(G) \mapsto \mathcal{C}$ is a vertex colouring such that no two distinct adjacent vertices have the same colour. The cardinality of a minimum set of colours which allows a proper vertex colouring of $G$ is called the chromatic number of $G$ and is denoted $\chi(G)$. When a vertex colouring is considered with colours of minimum subscripts the colouring is called a \textit{minimum parameter} colouring. Unless stated otherwise we consider minimum parameter colour sets throughout this paper. The number of times a colour $c_i$ is allocated to vertices of a graph $G$ is denoted by $\theta(c_i)$ and $\varphi:v_i \mapsto c_j$ is abbreviated, $c(v_i) = c_j$. Furthermore, if $c(v_i) = c_j$ then $\iota(v_i) = j$. We shall also colour a graph in accordance with the Rainbow Neighbourhood Convention \cite{KS1}.

\subsection{Rainbow Neighbourhood Convention} 

The rainbow neighbourhood convention has been introduced in \cite{KS1} as follows: Consider a proper minimal colouring $\mathcal{C} =\{c_1,c_2,c_3,\dots,c_\ell\}$ of a graph $G$ under consideration, where $\ell=\chi(G)$. Colour maximum possible number of vertices of $G$ with the colour $c_1$, then colour the maximum possible number of remaining uncoloured vertices with colour $c_2$ and proceeding like this, at the final stage colour the remaining uncoloured vertices by the colour $c_\ell$. Such a colouring may be called a \textit{$\chi^-$-colouring} of a graph.

The inverse to the convention requires the mapping $c_j\mapsto c_{\ell -(j-1)}$. Corresponding to the inverse colouring we define $\iota'(v_i) = \ell-(j-1)$ if $c(v_i) = c_j$.  The inverse of a $\chi^-$-colouring is called a \textit{$\chi^+$-colouring} of $G$.

\subsection{Chithra Graphs}

The family of \textit{Chithra graphs} has been defined in \cite{KS2} as follows: Consider the set $\mathcal{V}(G)$ of all subsets of the vertex set, $V(G)$. Let $\mathcal{W}$ be a collection of any finite number, say $k$, subsets $W_i$, with repetition of selection allowed. That is, $\mathcal{W}= \{W_i: 1 \leq i \leq k, W_i \in \mathcal{V}(G)\}$, with repetition allowed. (This means that, contrary to conventional set theory, we have $\{W_i,\ldots,W_i\} \neq \{W_i\}$). For all non-empty subsets $W_i,\ 1 \leq \ell \leq k$, the corresponding additional vertices $u_1, u_2, u_3, \ldots , u_\ell$, add the edges $u_iv_j$, $\forall v_j \in W_j \subseteq V(G)$. This new graph is called a \textit{Chithra graph} of the given graph $G$ and is  denoted by $\C_{\mathcal{W}}(G)$. The family of Chithra graphs of the graph $G$ is denoted by $\mathfrak{C}(G)$. 

Note in general that the empty subset of $V(G)$ may be selected. However, the empty subset does not represent an additional vertex. Hence, the vertex $u_\emptyset$ is empty. This argument implies that for a set of empty sets, $\emptyset = \bigcup\limits_{i=1}^{k}\emptyset_i$, we have $\C_\emptyset(G)=G \in \mathfrak{C}(G)$ as well. 

Also, note that the Chithra graphs can be constructed from non-connected (disjoint) graphs and from edgeless (null) graphs. It means that for the edgeless graph on $n$ vertices, denoted by $\mathfrak{N}_n$, the complete bi-partite graph, $K_{n,m}\in \mathfrak{C}(\mathfrak{N}_n)$. Equally so, $K_{n,m}\in \mathfrak{C}(\mathfrak{N}_m)$. 

We note that the family of Chithra graphs is a wide ranging family of graphs which includes any graph of size at least one. Compared to split graphs the important relaxation is that a complete component (clique) is not a necessary requirement. A split graph is isomorphic to a Chithra graph on condition that each vertex in the independent set is mapped onto a subset of the set of vertices of the clique component. We say that a split graph is an explicit Chithra graph of a complete graph. A number of well-known classes of graphs are indeed explicit Chithra graphs of some graph $G$. Small graphs such as sun graphs, sunlet graphs, crown graphs and helm graphs are all explicit Chithra graphs of some cycle $C_n$.   Chithra graphs can serve as a graph theoretical model for genetic engineering techniques or for modelling natural mutation within various biological networks found in living systems.

\section{$J$-Colouring of Chithra graphs}

The closed neighbourhood $N[v]$ of a vertex $v \in V(G)$ which contains at least one coloured vertex of each colour in the chromatic colouring, is called a rainbow neighbourhood. We say that vertex $v$ yields a rainbow neighbourhood. We also say that vertex $u \in N[v]$ belongs to the rainbow neighbourhood $N[v]$.

\begin{definition}{\rm \cite{NKS}
A maximal proper colouring of a graph $G$ is a \textit{Johan colouring} or a \textit{$J$-colouring} if and only if every vertex of $G$ belongs to a rainbow neighbourhood of $G$. The maximum number of colours in a $J$-colouring is denoted by $\J(G)$.
}\end{definition}

\begin{definition}{\rm \cite{NKS}
A maximal proper colouring of a graph $G$ is a \textit{modified Johan colouring} or a \textit{$J^*$-colouring} if and only if every internal vertex of $G$ belongs to a rainbow neighbourhood of $G$. The maximum number of colours in a $J^*$-colouring is denoted by $\J^*(G)$.
}\end{definition}

It follows that $\J(\mathfrak{N}_n) = \J^*(\mathfrak{N}_n) = 1,\ n \in \N$ and for any graph $G$ which admits a $J$-colouring, $\chi(G) \leq \J(G)$. Furthermore, if a graph $G$ admits a $J$-colouring it admits a $J^*$-colouring. However, the converse is not always true. 

Unless mentioned otherwise all subsets $W_i \subseteq V(G)$ will be non-empty.

\begin{theorem}\label{Thm-2.1}
If a graph $G$ admits a $J$-colouring and if for each $W_i \in \mathcal{W}$, $W_i =N(v)$ for some $v \in V(G)$, then $\J(\C_{\mathcal{W}}(G))= \J(G)$.
\end{theorem}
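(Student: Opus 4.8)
The plan is to prove the equality through two opposite inequalities, exploiting that the hypothesis $W_i = N(v)$ makes each added vertex $u_i$ a \emph{false twin} of some $v_i \in V(G)$: since the construction adds only the edges $u_iv_j$ with $v_j \in W_i$, we have $N_{\C_{\mathcal{W}}(G)}(u_i) = W_i = N(v_i)$, so $u_i$ and $v_i$ share an open neighbourhood and are non-adjacent, while the added vertices $u_1,\dots,u_\ell$ form an independent set. Write $H = \C_{\mathcal{W}}(G)$ throughout.

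For the inequality $\J(H) \ge \J(G)$, I would begin from a $J$-colouring $\varphi$ of $G$ with $\J(G)$ colours and extend it by setting $\varphi(u_i) := \varphi(v_i)$ for each $i$. This remains proper, because $u_i$ is adjacent only to the vertices of $N(v_i)$ and $\varphi(v_i)$ differs from every colour occurring there. To check the rainbow condition, note that each original vertex $w$ retains its rainbow neighbourhood since $N_H[w] \supseteq N_G[w]$. For a new vertex $u_i$, the closed neighbourhood $N_G[v_i] = \{v_i\}\cup N(v_i)$ is rainbow and $v_i$ is the only vertex of colour $\varphi(v_i)$ in it, so $N(v_i)$ exhibits all $\J(G)$ colours except $\varphi(v_i)$; adjoining $u_i$ (coloured $\varphi(v_i)$) supplies the missing colour, so $N_H[u_i] = \{u_i\}\cup N(v_i)$ is rainbow. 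Thus the extension is a $J$-colouring of $H$ with $\J(G)$ colours.

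For the reverse inequality, I would take an arbitrary $J$-colouring $\psi$ of $H$ with $t$ colours. The crucial step, which I expect to be the main obstacle, is to show $\psi(u_i) = \psi(v_i)$ for every $i$. Here is the argument I would use: $N_H[u_i] = \{u_i\}\cup N(v_i)$ is rainbow, hence carries all $t$ colours, and by properness no vertex of $N(v_i)$ bears $\psi(u_i)$; therefore $N(v_i)$ displays exactly the $t-1$ colours other than $\psi(u_i)$. On the other hand, properness at $v_i$ forbids $\psi(v_i)$ from appearing on $N(v_i)$, so the unique colour missing from $N(v_i)$, namely $\psi(u_i)$, must equal $\psi(v_i)$.

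With this identity secured, I would restrict $\psi$ to $V(G)$ and verify it is a $J$-colouring of $G$ with $t$ colours. No colour is lost in the restriction, since any colour appearing on some $u_i$ also appears on $v_i \in V(G)$. For an original vertex $w$, the only way $N_G[w]$ could fail to be rainbow is if some colour of $N_H[w]$ is contributed solely by an adjacent added vertex $u_i$; but $w \in W_i = N(v_i)$ forces $v_i \in N_G[w]$, and $\psi(v_i) = \psi(u_i)$, so that colour already occurs in $N_G[w]$. Hence every $N_G[w]$ is rainbow and $\J(G) \ge t = \J(H)$. Combining this with the first inequality gives $\J(\C_{\mathcal{W}}(G)) = \J(G)$.
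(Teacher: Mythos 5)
Your proposal is correct, and it is strictly more complete than the paper's own argument. The lower bound $\J(\C_{\mathcal{W}}(G)) \ge \J(G)$ is proved exactly as in the paper: each $u_i$ is a non-adjacent false twin of the corresponding $v_i$, one sets $c(u_i)=c(v_i)$, and one checks that $N[u_i]=\{u_i\}\cup N(v_i)$ picks up all $\J(G)$ colours (your observation that $N(v_i)$ already carries every colour except $c(v_i)$ is the detail the paper leaves implicit). Where you genuinely go beyond the paper is the reverse inequality: the paper stops after exhibiting one $J$-colouring of $\C_{\mathcal{W}}(G)$ with $\J(G)$ colours and simply asserts the equality, whereas you take an arbitrary $J$-colouring $\psi$ of the Chithra graph, prove the forced identity $\psi(u_i)=\psi(v_i)$ (both colours are the unique colour absent from $N(v_i)$), and use it to push $\psi$ down to a $J$-colouring of $G$ with the same number of colours; that key lemma is what actually rules out $\J(\C_{\mathcal{W}}(G))>\J(G)$, a case the paper never addresses. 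One small caveat: your upper-bound step reads the rainbow condition as ``every vertex yields a rainbow neighbourhood,'' so that $N[u_i]$ itself must be rainbow, which is formally stronger than the literal ``belongs to a rainbow neighbourhood'' phrasing of the definition; since the paper itself verifies ``$N[u_i]$ is a rainbow neighbourhood of $u_i$'' in its own proofs, your reading is clearly the intended one, but it is worth flagging that your argument depends on it.
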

\begin{proof}
It is obvious that if $G$ does not admit a $J$-colouring then $\C_{\mathcal{W}}(G)$ cannot admit a $J$-colouring for all $\mathcal{W}\subseteq \mathcal{V}(G)$. Hence, assume that $G$ admits a $J$-colouring. Consider any $W_i = N(v)$ for some $v \in V(G)$. From the definition of the corresponding Chithra graph, $v$ and $u_i$ are non-adjacent and hence $c(u_i)=c(v)$ and this colouring remains as a maximal proper colouring. Also, $N[u_i]$ is a rainbow neighbourhood of $u_i$. By mathematical induction, the same argument follows for all $W_i \in \mathcal{W}$. Therefore, $\C_{\mathcal{W}}(G)$ admits a $J$-colouring. 
\end{proof}

\begin{corollary}\label{Cor-2.2}
If for a graph $G$ each $W_i \in \mathcal{W}$, $W_i =N(v)$ for some $v \in V(G)$, then $\chi(\C_{\mathcal{W}}(G))= \chi(G)$.
\end{corollary}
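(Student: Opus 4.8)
The plan is to prove the equality by establishing the two inequalities $\chi(\C_{\mathcal{W}}(G)) \geq \chi(G)$ and $\chi(\C_{\mathcal{W}}(G)) \leq \chi(G)$ separately. The lower bound is immediate: the Chithra construction only introduces the new vertices $u_1,\ldots,u_k$ together with edges of the form $u_iv_j$, and it leaves $V(G)$ and $E(G)$ untouched, so $G$ sits inside $\C_{\mathcal{W}}(G)$ as an induced subgraph. Any proper colouring of $\C_{\mathcal{W}}(G)$ therefore restricts to a proper colouring of $G$, which forces $\chi(\C_{\mathcal{W}}(G)) \geq \chi(G)$.

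For the upper bound I would reuse the colour-transfer idea already employed in the proof of Theorem~\ref{Thm-2.1}. First fix a proper colouring of $G$ that uses exactly $\chi(G)$ colours. For each added vertex $u_i$ the hypothesis supplies a vertex $v \in V(G)$ with $W_i = N(v)$; since $G$ is simple we have $v \notin N(v)$, so $u_i$ and $v$ are non-adjacent and I may safely set $c(u_i) = c(v)$. Because the colouring of $G$ is proper, $c(v) \neq c(w)$ for every $w \in N(v) = W_i$, and $W_i$ is precisely the neighbourhood of $u_i$ in $\C_{\mathcal{W}}(G)$; hence $u_i$ is assigned a colour distinct from each of its neighbours. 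Crucially, this assignment introduces no colour outside the original palette of $\chi(G)$ colours.

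The one point that genuinely requires care is verifying that no conflict arises among the newly added vertices themselves, and this is where I expect the only (mild) obstacle to lie. Here the construction rescues us: the only edges added are $u_iv_j$ with $v_j \in V(G)$, so $\{u_1,\ldots,u_k\}$ is an independent set and the colour given to any $u_i$ can never clash with that of another $u_j$. It follows that the assignment above is a proper colouring of $\C_{\mathcal{W}}(G)$ using no more than $\chi(G)$ colours, whence $\chi(\C_{\mathcal{W}}(G)) \leq \chi(G)$. Combining the two bounds yields the claimed equality. Overall I anticipate no serious difficulty, since the result is essentially a direct consequence of the colouring already constructed in Theorem~\ref{Thm-2.1}.
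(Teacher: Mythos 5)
Your proof is correct and follows essentially the same route as the paper, which simply invokes the colour-transfer argument from Theorem~\ref{Thm-2.1}: since $W_i=N(v)$ and $v\notin N(v)$, one may set $c(u_i)=c(v)$ without conflict, adding no new colour. Your write-up merely makes explicit the two details the paper leaves implicit (the lower bound via $G$ being an induced subgraph, and the independence of the added vertices $u_i$), so it is a more careful rendering of the same idea.
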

\begin{proof}
The result follows by similar reasoning in the proof of Theorem \ref{Thm-2.1}.
\end{proof}

\begin{theorem}\label{Thm-2.3}
If a graph $G$ admits a $J$-colouring then, $\J(\C_{\mathcal{W}}(G))= \J(G)+1$  if for each $v \in V(G)$, then there is at least one $W_i \in \mathcal{W}$ such that $N[v] \subseteq W_i $.
\end{theorem}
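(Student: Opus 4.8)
The plan is to establish the equality by proving the two bounds $\J(\C_{\mathcal{W}}(G))\ge \J(G)+1$ and $\J(\C_{\mathcal{W}}(G))\le \J(G)+1$ separately, writing $H=\C_{\mathcal{W}}(G)$ for brevity. I fix once and for all a $J$-colouring $\varphi$ of $G$ with colour set $\{c_1,\dots,c_{\J(G)}\}$; the point to keep in mind is that, since $\varphi$ is a $J$-colouring, \emph{every} closed neighbourhood $N[v]$ with $v\in V(G)$ already carries all $\J(G)$ colours. The second structural fact I will lean on repeatedly is that the added vertices $u_i$ form an independent set, each $u_i$ being adjacent only to the vertices of $W_i\subseteq V(G)$; hence $N_H[u_i]=\{u_i\}\cup W_i$. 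Under the hypothesis each such $W_i$ contains a closed neighbourhood $N[v]$ and is therefore rainbow.

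For the lower bound I would exhibit an explicit $J$-colouring of $H$ on $\J(G)+1$ colours: keep $\varphi$ on $V(G)$ and give the single fresh colour $c_{\J(G)+1}$ to every added vertex. This is proper, since the added vertices are mutually non-adjacent and meet only old-coloured vertices of $V(G)$. To verify the rainbow property I check the two kinds of vertices. For $v\in V(G)$ the hypothesis yields some $W_i\supseteq N[v]$, so $u_i$ is adjacent to $v$; then $N_H[v]$ contains $N_G[v]$ (all $\J(G)$ old colours) together with $u_i$ (the colour $c_{\J(G)+1}$), and $v$ is rainbow. For an added vertex $u_i$ we have $N_H[u_i]=\{u_i\}\cup W_i$, and since $W_i$ is rainbow it supplies all $\J(G)$ old colours while $u_i$ supplies $c_{\J(G)+1}$; thus $u_i$ is rainbow. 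This produces a $J$-colouring with $\J(G)+1$ colours.

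For the upper bound I would start from an arbitrary $J$-colouring $\phi$ of $H$ on $t$ colours and show $t\le \J(G)+1$. The engine is the relation $N_H[u_j]=\{u_j\}\cup W_j$ with $W_j\subseteq V(G)$. Let $R$ be the set of colours that occur on no vertex of $V(G)$. Since $W_j\subseteq V(G)$, the only colour of $R$ that can appear in $N_H[u_j]$ is $\phi(u_j)$; as $u_j$ must be rainbow this forces $R\subseteq\{\phi(u_j)\}$ for every $j$, whence $|R|\le 1$ and, when $|R|=1$, every added vertex carries the same colour $c^{\ast}$. In that case each $v\in V(G)$ can receive $c^{\ast}$ only from added neighbours, so the remaining $t-1$ colours must already appear inside $N_G[v]$; consequently $\phi$ restricted to $V(G)$ is a $J$-colouring of $G$ on $t-1$ colours and $t-1\le \J(G)$, as required.

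The step I expect to be the real obstacle is the remaining case $R=\varnothing$, in which every colour already occurs on $V(G)$. Here the restriction of $\phi$ to $V(G)$ need not be a $J$-colouring: for the witness set $W_i\supseteq N[v]$ the added vertex $u_i$ is adjacent to all of $N_G[v]$, so $\phi(u_i)$ is a colour that $N_G[v]$ must omit, and that colour is seen by $v$ only through added vertices. The task is to show this deficiency cannot push $t$ beyond $\J(G)+1$; I would attempt either to recolour one colour class of $V(G)$ so that a colour becomes exclusive to the added vertices, thereby reducing to the solved case $|R|=1$, or to argue directly that the forced omission in every $N_G[v]$ caps the total number of colours. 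Combining the two bounds then gives $\J(\C_{\mathcal{W}}(G))=\J(G)+1$.
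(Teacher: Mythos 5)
Your lower-bound construction is correct and is, in substance, the entirety of the paper's own argument: the paper extends a $J$-colouring of $G$ by one fresh colour placed on all added vertices, checks that every closed neighbourhood of $\C_{\mathcal{W}}(G)$ becomes rainbow, and stops there, never addressing the reverse inequality $\J(\C_{\mathcal{W}}(G))\le \J(G)+1$. So the half you prove cleanly already covers everything the paper actually establishes. Your upper-bound analysis is also sound as far as it goes: the observation that $N_H[u_j]=\{u_j\}\cup W_j$ forces $|R|\le 1$, and the reduction to a $J$-colouring of $G$ on $t-1$ colours when $|R|=1$, are both correct.

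The case $R=\varnothing$ that you flag as the ``real obstacle'' is, however, not an obstacle to be engineered around: the upper bound is simply false there, so neither of your two proposed repairs can succeed. Take $G=2K_2$ with $V(G)=\{a,b,c,d\}$ and edges $ab$, $cd$; then $\J(G)=2$, since every closed neighbourhood has only two vertices. Choose $\mathcal{W}=\{\{a,b,c\},\{a,c,d\},\{b,c,d\},\{a,b,d\}\}$; the hypothesis of the theorem holds because $N[a]=N[b]=\{a,b\}\subseteq\{a,b,c\}$ and $N[c]=N[d]=\{c,d\}\subseteq\{a,c,d\}$. In $H=\C_{\mathcal{W}}(G)$ colour $a,b,c,d$ with $1,2,3,4$ and the added vertices $u_1,u_2,u_3,u_4$ with $4,2,1,3$ respectively. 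This colouring is proper, and every closed neighbourhood of $H$ contains all four colours (for instance $N_H[a]=\{a,b,u_1,u_2,u_4\}$ carries the colours $1,2,4,2,3$, and $N_H[u_1]=\{u_1,a,b,c\}$ carries $4,1,2,3$), so $\J(H)\ge 4>\J(G)+1=3$. Note that all four colours occur on $V(G)$, i.e.\ this is precisely your unresolved case. The honest conclusion of your two-sided strategy is therefore that the hypothesis guarantees only $\J(\C_{\mathcal{W}}(G))\ge \J(G)+1$; the claimed equality, and with it the theorem as stated (and the paper's proof of it), is incorrect.
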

\begin{proof}
It is obvious that if $G$ does not admit a $J$-colouring, then $\C_{\mathcal{W}}(G)$ cannot admit a $J$-colouring for all $\mathcal{W}\subseteq \mathcal{V}(G)$. Hence, assume that $G$ admits a $J$-colouring. Consider any $W_i$ for which $N[v] \subseteq W_i$ for some $v \in V(G)$. From the definition of the corresponding Chithra graph, $u_i$ is  adjacent to at least $N[v]$ and hence $c(u_i) \neq c(v'),\ v' \in N[v]$. Therefore, $u_i$ requires a new colour say, $c_t$ to ensure a maximal proper colouring. Also, $N[u_i]$ is a rainbow neighbourhood of $u_i$. Since $c(u_i) = c(u_j)=c_t$, $\forall i,j$, all vertices in $V(G)$ yield the new rainbow neighbourhood in $\C_{\mathcal{W}}(G)$. Therefore, $\C_{\mathcal{W}}(G)$ admits a $J$-colouring and $\J(\C_{\mathcal{W}}(G))= \J(G)+1$.
\end{proof}

\begin{corollary}\label{Cor-2.4}
For a graph $G$, $\chi(\C_{\mathcal{W}}(G))= \chi(G)+1$ if for each $v \in V(G)$, then there exists at least one $W_i \in \mathcal{W}$ such that $N[v] \subseteq W_i $.
\end{corollary}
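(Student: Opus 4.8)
The plan is to establish the two inequalities $\chi(\C_{\mathcal{W}}(G)) \le \chi(G)+1$ and $\chi(\C_{\mathcal{W}}(G)) \ge \chi(G)+1$ separately, and then combine them. A point worth flagging at the outset is that, unlike Theorem \ref{Thm-2.3}, the hypothesis of this corollary does \emph{not} assume that $G$ admits a $J$-colouring; it applies to an arbitrary graph $G$. So although the statement is flagged as following ``by similar reasoning'', the argument must genuinely rest only on the chromatic number of $G$ and on the structure forced by the condition $N[v]\subseteq W_i$, rather than on any rainbow-neighbourhood property of a $J$-colouring.

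For the upper bound I would start from any proper $\chi(G)$-colouring of $G$ with colours $c_1,\dots,c_{\chi(G)}$. By the construction of a Chithra graph, each added vertex $u_i$ is joined only to vertices of $V(G)$ and never to another added vertex, so the set of added vertices is independent in $\C_{\mathcal{W}}(G)$. Consequently, colouring every $u_i$ with a single fresh colour $c_{\chi(G)+1}$ produces a proper colouring of $\C_{\mathcal{W}}(G)$, which yields $\chi(\C_{\mathcal{W}}(G))\le \chi(G)+1$. This direction is routine.

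The substantive direction is the lower bound, and this is where I expect the main obstacle to lie. I would argue by contradiction: suppose $\C_{\mathcal{W}}(G)$ admits a proper colouring using only $\chi(G)$ colours. Its restriction to $V(G)$ is then a proper colouring of $G$ with exactly $\chi(G)$ colours. The key claim I need is that in any proper colouring of $G$ using exactly $\chi(G)$ colours, at least one vertex $v$ has a closed neighbourhood $N[v]$ that receives all $\chi(G)$ colours, i.e. yields a rainbow neighbourhood. Granting this claim, the hypothesis supplies a $W_i\in\mathcal{W}$ with $N[v]\subseteq W_i$, so $u_i$ is adjacent to a vertex of every one of the $\chi(G)$ colour classes occurring in $N[v]$; hence $u_i$ cannot reuse any of the $\chi(G)$ colours, a contradiction. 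This forces $\chi(\C_{\mathcal{W}}(G))\ge \chi(G)+1$, and together with the upper bound gives $\chi(\C_{\mathcal{W}}(G))=\chi(G)+1$.

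The heart of the matter is therefore the rainbow-neighbourhood claim, which I would settle by a recolouring argument. If no vertex had a rainbow closed neighbourhood, then in particular every vertex coloured $c_{\chi(G)}$ would miss some colour $c_m$ with $m<\chi(G)$ throughout its neighbourhood. Since the colour class $c_{\chi(G)}$ is an independent set, and since no neighbour of such a vertex lies in the class being recoloured, each vertex of colour $c_{\chi(G)}$ could be simultaneously recoloured to a colour it misses, producing a proper colouring with only $\chi(G)-1$ colours and contradicting the minimality of $\chi(G)$. The delicate point to verify carefully is precisely that these simultaneous recolourings cannot introduce a conflict, which holds exactly because a colour class is independent and the recoloured vertices have no recoloured neighbours.
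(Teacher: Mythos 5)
Your proof is correct, but it is substantially more complete than the paper's, which disposes of Corollary \ref{Cor-2.4} in one line by appeal to ``similar reasoning'' as in Theorem \ref{Thm-2.3}. The point you flag at the outset is exactly the right one: the argument of Theorem \ref{Thm-2.3} uses the hypothesis that $G$ admits a $J$-colouring to guarantee that $N[v]$ is a rainbow neighbourhood, so that the new vertex $u_i$ with $N[v]\subseteq W_i$ sees all colours and is forced to take a fresh one. The corollary drops that hypothesis, so the transfer is not automatic, and your recolouring lemma --- in any proper colouring of $G$ with exactly $\chi(G)$ colours, some vertex has a closed neighbourhood meeting every colour class, since otherwise the whole of one colour class could be simultaneously recoloured into lower classes (legitimately, because a colour class is independent and no recoloured vertex has a recoloured neighbour) --- is precisely the ingredient needed to make the lower bound go through for an arbitrary $G$. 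Your upper bound, via the observation that the added vertices form an independent set and can all receive one fresh colour, is also needed and is absent from the paper's sketch (Theorem \ref{Thm-2.3} only argues that $u_i$ \emph{requires} a new colour, not that one new colour suffices globally). In short, you and the paper share the same core mechanism ($N[v]\subseteq W_i$ forces $u_i$ out of the old palette), but your version supplies the two justifications the paper leaves implicit, and in doing so actually proves the statement at the stated level of generality.
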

\begin{proof}
The result follows by similar reasoning in the proof of Theorem \ref{Thm-2.3}.
\end{proof}

\begin{corollary}\label{Cor-2.5}
If all $W_i \in \mathcal{W}$, is such that for all $J$-colourings of $G$,  at least one coloured vertex of each colour is an element of $W_i$ and each $v \in V(G)$ is an element of some $W_i$ then, $\J(\C_{\mathcal{W}}(G))= \J(G)+1$. 
\end{corollary}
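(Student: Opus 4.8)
The plan is to follow the architecture of the proof of Theorem \ref{Thm-2.3} closely, since the present hypotheses are a relaxation of those in that theorem: whenever $N[v]\subseteq W_i$ for a $J$-colouring of $G$, the set $W_i$ automatically contains a vertex of each colour, and each $v$ lies in the $W_i$ containing its own closed neighbourhood. Thus I expect the two conclusions to coincide and the same two-sided estimate to carry over. First I would dispose of the trivial direction: if $G$ admits no $J$-colouring then neither does $\C_{\mathcal{W}}(G)$, so I assume $G$ admits a $J$-colouring and fix one with colours $c_1,c_2,\dots,c_{\J(G)}$.

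For the lower bound $\J(\C_{\mathcal{W}}(G))\ge \J(G)+1$ I would argue constructively. By hypothesis each $W_i$ contains at least one vertex of each of the colours $c_1,\dots,c_{\J(G)}$, and since $u_i$ is adjacent to every vertex of $W_i$, it cannot reuse any of these colours; hence $u_i$ forces a new colour $c_t=c_{\J(G)+1}$. Because the added vertices $u_i$ are pairwise non-adjacent, they form an independent set and may all receive this single common colour $c_t$, keeping the colouring proper and maximal. I would then verify the rainbow condition for both vertex types: for each added vertex, $N[u_i]=\{u_i\}\cup W_i$ already carries all of $c_1,\dots,c_{\J(G)}$ together with $c_t$, so $u_i$ yields a rainbow neighbourhood; and since each $v\in V(G)$ belongs to some $W_i$, it is adjacent to that $u_i$ of colour $c_t$, so $v$'s original rainbow neighbourhood in $G$ is augmented by $c_t$ and becomes a rainbow neighbourhood in $\C_{\mathcal{W}}(G)$. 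This establishes that the described colouring is a $J$-colouring using exactly $\J(G)+1$ colours.

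For the upper bound $\J(\C_{\mathcal{W}}(G))\le \J(G)+1$ I would take an arbitrary $J$-colouring of $H=\C_{\mathcal{W}}(G)$ attaining $\J(H)$ colours and restrict attention to $V(G)$. Since each $u_i$ must yield a rainbow neighbourhood and $N[u_i]\setminus\{u_i\}=W_i\subseteq V(G)$, the set $W_i$ must carry at least $\J(H)-1$ colours, so $V(G)$ itself uses at least $\J(H)-1$ colours; here the hypothesis that $W_i$ is rainbow for \emph{every} $J$-colouring of $G$ is precisely what forces the induced colouring on $V(G)$ to behave like a $J$-colouring of $G$, so that at most one colour of $H$ can be confined to the added independent set. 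Comparing with $\J(G)$ then yields $\J(H)-1\le \J(G)$, and combining with the lower bound gives equality.

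The main obstacle I anticipate is the upper bound, and specifically the claim that the restriction of a $J$-colouring of $H$ to $V(G)$ is (or can be adjusted to) a genuine $J$-colouring of $G$ in which every $v\in V(G)$ yields a rainbow neighbourhood \emph{within} $G$. Because the added vertices could a priori inject several colours absent from $V(G)$, one must rule this out, and this is exactly the point at which the phrase ``for all $J$-colourings of $G$, at least one coloured vertex of each colour is an element of $W_i$'' is indispensable: it prevents the independent set $\{u_i\}$ from contributing more than a single new colour, pinning the value at $\J(G)+1$ rather than merely bounding it below.
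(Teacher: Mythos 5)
Your constructive half is exactly the paper's argument: the published proof of Corollary \ref{Cor-2.5} is a one-line appeal to ``similar reasoning'' in Theorem \ref{Thm-2.3}, whose proof does precisely what you do --- give every added vertex $u_i$ one common new colour $c_t$ (legitimate since the $u_i$ are pairwise non-adjacent and each $W_i$ meets every colour class), check that $N[u_i]$ is rainbow, and use the hypothesis that each $v\in V(G)$ lies in some $W_i$ to see that every original vertex picks up $c_t$. So on that side you match the paper. Where you diverge is that you also attempt the upper bound $\J(\C_{\mathcal{W}}(G))\le \J(G)+1$, which the paper simply never addresses (it asserts equality after exhibiting the colouring). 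Your sketch of that bound is the weakest part: from a $J$-colouring of $H=\C_{\mathcal{W}}(G)$ you get that each $W_i$ carries at least $\J(H)-1$ colours, but the step from there to ``the restriction to $V(G)$ is a $J$-colouring of $G$'' does not follow as stated --- a vertex $v$ may owe some of the colours in $N_H[v]$ to adjacent $u_i$'s, and the hypothesis quantifies over $J$-colourings of $G$, not over arbitrary restrictions of colourings of $H$, so it cannot be invoked directly. You flag this yourself as the anticipated obstacle, and since the paper offers nothing at all for this direction, your proposal is at least as complete as the published proof; just be aware that the maximality/upper-bound half remains genuinely unproven in both.
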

\begin{proof}
The result follows by similar reasoning in the proof of Theorem \ref{Thm-2.3}.
\end{proof}

\subsection{Special Class of Chithra Graphs}

For $X \subseteq V(G)$, recall that $\langle X\rangle$ denotes the subgraph induced by $X$. Consider a graph $G$ of even order $n$ with the properties that $V(G)$ can be partitioned into $X=\{v_i:1\leq i\leq \frac{n}{2}\}$ and $Y=\{u_i: 1\leq i \leq \frac{n}{2}\}$ such that $N(u_i) = N(v_i),\ \forall\, i$. Then, $\C_Y(G)=\mu(\langle X\rangle)$ is the Mycielskian of $\langle X\rangle$. Then, we have

\begin{theorem}
Whether or not a graph $G$ admits a $J$-colouring, its Mycielskian graph $\mu(G)$ cannot.
\end{theorem}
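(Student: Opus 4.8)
The plan is to argue by contradiction, exploiting the rigidity that the apex forces on the colouring. Write the Mycielskian $\mu(G)$ with original vertices $v_1,\dots,v_n$, their shadows $u_1,\dots,u_n$ (where $u_i$ is adjacent to $v_j$ precisely when $v_iv_j\in E(G)$), and the apex $w$ adjacent to every $u_i$ but to no $v_i$. Suppose $\mu(G)$ admits a $J$-colouring $c$ with $k$ colours, so that $N[x]$ contains all $k$ colours for every vertex $x$; note $k\geq 2$, since $V(G)\neq\emptyset$ already produces the edge $wu_1$. First I would record two elementary facts. Since $w$ is adjacent to all shadows, no shadow carries the colour $c(w)$. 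Since $w\notin N[v_i]$ for every original vertex, and no shadow is coloured $c(w)$, an original vertex $v_i$ can see the colour $c(w)$ in $N[v_i]$ only through $v_i$ itself or an original neighbour; as $v_i$ must yield a rainbow neighbourhood, the class $D=\{v_i:c(v_i)=c(w)\}$ is a nonempty dominating set of $G$, and being a colour class it is independent.

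The key step is a structural lemma pinning down the colours of the original vertices. For each $i$, the rainbow condition on $u_i$ applied to $N[u_i]=\{u_i,w\}\cup N(v_i)$ forces the originals in $N(v_i)$ to realise every colour except possibly $c(u_i)$ and $c(w)$. Because $v_i$ is adjacent to all of $N(v_i)$, its colour must avoid these (at least $k-2$) colours, leaving $c(v_i)\in\{c(w),c(u_i)\}$. I expect this lemma to be the main obstacle, since it is where the entire rigidity of the construction is extracted; everything afterwards is forced.

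To finish, I would choose any $v_i\in D$, so that $c(v_i)=c(w)$. As $D$ is independent, every original neighbour $v_j$ of $v_i$ satisfies $c(v_j)\neq c(w)$, whence the lemma gives $c(v_j)=c(u_j)$. Therefore the colours occurring in $N[v_i]=\{v_i\}\cup N(v_i)\cup\{u_j:v_j\in N(v_i)\}$ are exactly $c(w)$ together with the colours $c(u_j)$ for $v_j\in N(v_i)$. But each such $v_j$ is adjacent to $u_i$ (since $v_j\in N(v_i)$ gives $v_i v_j\in E(G)$), so $c(u_j)=c(v_j)\neq c(u_i)$; and $c(u_i)\neq c(w)$. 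Hence the colour $c(u_i)$ appears nowhere in $N[v_i]$, so $v_i$ fails to yield a rainbow neighbourhood --- a contradiction. The only degenerate situation to dispatch separately is an isolated vertex of $G$, which remains isolated in $\mu(G)$ and trivially cannot lie in a rainbow neighbourhood once $k\geq 2$. Since this argument assumes nothing about $G$ beyond $V(G)\neq\emptyset$, it shows that $\mu(G)$ admits no $J$-colouring, whether or not $G$ itself does.
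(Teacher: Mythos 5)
Your argument is correct, and it is a genuinely different (and considerably more rigorous) route than the paper's. The paper argues top--down: it lifts a colouring of $G$ to the intermediate graph $G'$ by setting $c(u_i)=c(v_i)$, appeals to Mycielski's relation $\chi(\mu(G))=\chi(G)+1$ ``generalised'' to an unspecified invariant $\chi_{gen}$, and concludes that the apex sees $\chi_{gen}(G)+1$ colours while (implicitly) the original vertices cannot see the extra colour; no structural analysis of an arbitrary candidate colouring is carried out, and the step $\chi_{gen}(\mu(G))=\chi_{gen}(G)+1$ is asserted rather than proved. You instead take an arbitrary putative $J$-colouring of $\mu(G)$ and extract its rigidity directly: the rainbow condition at the shadow $u_i$, whose closed neighbourhood is only $\{u_i,w\}\cup N(v_i)$, pins down $c(v_i)\in\{c(w),c(u_i)\}$, and the nonempty independent dominating class $D=\{v_i:c(v_i)=c(w)\}$ then supplies a vertex whose closed neighbourhood provably misses the colour $c(u_i)$. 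What your approach buys is a complete, self-contained contradiction that makes no appeal to $\chi(\mu(G))=\chi(G)+1$, works uniformly for any number $k\ge 2$ of colours rather than only for $\J(G)+1$, and handles the degenerate isolated-vertex case explicitly; what the paper's sketch buys, if one accepts its generalisation step, is brevity and a heuristic explanation (``the apex adds a colour the base cannot see'') that your lemma in effect makes precise. The only point worth flagging is definitional: you read ``every vertex belongs to a rainbow neighbourhood'' as ``every closed neighbourhood is rainbow,'' which is the stronger reading but is the one the paper itself uses in its other proofs, so your argument matches the intended notion.
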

\begin{proof}
Consider a graph $G$ of order $n$ which admits a proper colouring which satisfies some general condition denoted, $\chi_{gen}(G)$. Let $V(G) = \{v_i:1\leq i \leq n\}$.  Construct the graph $G'$ by adding $n$ additional vertices $Y =\{u_i:1\leq i \leq i\}$ and edges $u_iv_j$ such that $N(u_i) = N(v_i)$. Clearly, $\C_Y(G') = \mu(G)$. By colouring $c(u_i)=c(v_i)$ the graph $G'$ admits a proper colouring which satisfies the same general condition and $\chi_{gen}(G')=\chi_{gen}(G)=|c(N[v_i])|=|c(N[u_i])|$.

We recall that $\chi(\mu(G))=\chi(G)+1$. The aforesaid can be generalise and we can then state that for any graph $G$, we have $\chi_{gen}(\mu(G))=\chi_{gen}(G)+1$.  It implies that the vertex say $x$ which corresponds to $Y$ in $\C_Y(G')$ has $|c(N[x])|=\chi_{gen}(G)+1$. Therefore, irrespective of whether a graph $G$ admits a $J$-colouring or not, its Mycielskian graph $\mu(G)$ does not admit a $J$-coloring.
\end{proof}

\section{Conclusion}

Chithra graphs are extremely general in that any graph with at least one edge is indeed a Chithra graph. To clarify the aforesaid statement, note that if $d_G(v) \geq 1$, then $G'=G-v$ implies that $\C_{N(v)}(G') \simeq G$. Complication in studying Chithra graph is the fact that a graph $G$ can generally be described isomorphic to different Chithra graphs. Admittedly, the description for some classes of graphs up to isomorphism, is unique. For example, for $K_n$, $n \geq 3$ it follows that $K_n = \C_{\{v_1,v_2,v_3,\dots,v_{n-1}\}}(K_{n-1})$. Identifying the appropriate description of a Chithra graph as an explicit Chithra graph of some known graph class to settle particular results is a hard problem.

Corollary \ref{Cor-2.2} and Corollary \ref{Cor-2.4} signal the ease with which most graph invariants can be defined in terms of explicit Chithra graphs. For example, for a minimum dominating set $D$ of a graph $G$ and $W_i \subseteq D$, for each $i$ we have that, $\gamma(\C_{\mathcal{W}}(G)) = \gamma(G)$. All these remarks show there is a wide scope for further research.

\end{document}